\documentclass[letterpaper, 10pt, conference]{ieeeconf}

\IEEEoverridecommandlockouts                              
\overrideIEEEmargins                                      

\usepackage{graphicx}

\usepackage{amsmath,amsthm}
\usepackage{cite}
\usepackage{amssymb}
\usepackage{newtxmath}
\usepackage{subfigure}
\usepackage{subfigmat}
\usepackage{epstopdf}
\usepackage{xcolor}
\usepackage{algorithm}
\usepackage{algorithmic}
\usepackage[utf8]{inputenc}
\allowdisplaybreaks

\usepackage{multirow,multicol,threeparttable, pgfplots}
\usepackage{booktabs}
\usepackage{enumerate}
\usepackage{color}

\setlength{\jot}{10pt}

\def\BibTeX{{\rm B\kern-.05em{\sc i\kern-.025em b}\kern-.08em T\kern-.1667em\lower.7ex\hbox{E}\kern-.125emX}}

\newtheorem{proposition}{Proposition}[section]

\setcounter{page}{1}

\title{\LARGE \bf Multi-UAV Routing for Persistent Intelligence Surveillance \& Reconnaissance Missions}

\author{Satyanarayana G. Manyam,  Steven Rasmussen,  David W. Casbeer, \\ Krishnamoorthy Kalyanam and Suresh Manickam%
\thanks{S. G. Manyam is a NRC Fellow, Air Force Research Laboratory, Wright-Patterson AFB, OH, USA {\tt\small msngupta@gmail.com}}%
\thanks{S. Rasmussen is a Principal Engineer for Miami Valley Aerospace LLC, Air Force Research Laboratory, Wright-Patterson AFB, OH, USA
        }%
\thanks{K. Kalyanam is a Research Scientist with the 
InfoSciTex Corporation, a DCS company, Dayton, OH, USA
        }%
\thanks{D. Casbeer is a Research Engineer with the United States Air Force, Air Force Research Laboratory, Wright-Patterson AFB, OH, USA
        }%
\thanks{S. Manickam is a Scientist with the Aeronautical Development Establishment, DRDO, Bangalore, India. }%
\thanks{This work is sponsored through the US-India joint Defense project US-IN-AF-15-001}%
}

\begin{document}
\maketitle
\thispagestyle{empty}
\pagestyle{empty}

\begin{abstract}
We consider a Persistent Intelligence, Surveillance and Reconnaissance (PISR) routing problem, which includes collecting data from a set of specified task locations and delivering that data to a control station. Each task is assigned a refresh rate based on its priority, where higher priority tasks require higher refresh rates.  The UAV team's objective is to minimize the maximum of the delivery times of all the tasks' data to the control station, while simultaneously, satisfying each task's revisit period constraint. The centralized path planning problem for this PISR routing problem is formulated using mixed integer linear programming and solved using a branch-and-cut algorithm. Heuristics are presented to find sub-optimal feasible solutions that require much less computation time. The algorithms are tested on several instances and their performance is compared with respect to the optimal cost and computation time.
\end{abstract}

\section{Introduction} \label{sec:intro}

Unmanned Aerial Vehicles (UAVs) are a natural choice for deployment in many military Intelligence, Surveillance and Reconnaissance (ISR) missions\cite{schanzreaper}. A typical ISR scenario involves monitoring a set of task locations for an indefinitely long period of time. These task locations can be buildings, road networks bordering a military base etc. Since these task locations are spatially dispersed, UAVs can be deployed to visit them regularly and ferry the information such as images, videos, sensor data etc. to a control station. This data needs to be delivered to the control station at regular intervals. The importance level of each of the task locations may vary from minimal to highly critical. While scheduling these monitoring missions, it is imperative to schedule the UAV to visit important task more frequently than the ones with lesser significance. 

We consider a persistent monitoring scenario, where a set of task locations needs to be visited persistently by multiple UAVs. We assume all the available UAVs are homogeneous. Therefore, there is no difference between visits by different UAVs to the same task. We are interested in two metrics \textit{viz.} data latency or delivery time (to the control station) and revisit rate or revisit period. We define the data delivery time (or latency time) as the time elapsed from collection of data from a task to the time the data is delivered to the control station. The data needs to be delivered at control stations as early as possible. The revisit period is the time between two successive visits to a task location; a task with higher priority needs to have a smaller revisit period compared to a low priority.

\textit{Prior work:} Several variants of persistent routing problem were addressed in \cite{chevaleyrepatrol, nigampers, elmaliachareapatrol, stumpmulti, pasqualetticooperative, smitht2012pers, smith2014persistent, aksaraydistributed, ghosepers}. Strategies to perform patrolling tasks by multiple agents on a network defined as a graph are presented in \cite{chevaleyrepatrol}. In \cite{nigampers, elmaliachareapatrol }, persistent surveillance of an area decomposed into cells is considered; \cite{nigampers} attempts to minimize the maximum time since last visit of all the cells, whereas \cite{elmaliachareapatrol} balances the frequency of visit to each cell. Persistent surveillance problems with tasks spatially distributed is posed as a vehicle routing problem with time windows in \cite{stumpmulti}. Patrolling strategies to minimize the refresh time of the viewpoints is presented in \cite{pasqualetticooperative}. Approximation algorithms are presented in \cite{smith2014persistent}, that minimize the maximum weighted latency (time between successive visits) in discrete environments. In \cite{aksaraydistributed}, the authors attempts to minimize time between two consecutive visits to partitioned regions while satisfying temporal logic constraints of each agent. A persistent routing scenario where some regions need more visitation than others is addressed in \cite{ghosepers}, and a policy to achieve that is proposed. 

In this article, we consider a persistent routing of tasks that are spatially distributed. Also the data collected at the task locations needs to be delivered at a control station (also referred to as depot). In the existing literature concerning persistent routing, the concept of a control station is not considered and delivery time is not addressed. 

We model this persistent routing problem as a multiple traveling salesman problem with revisit period constraints, and formulated as a mixed integer linear programming (MILP) problem. The contributions of this article are the following: ($i$) We present a novel formulation addressing two important metrics, delivery time and revisit period for the tasks in ISR missions and model it as a multiple vehicle path planning problem with cycle length constraints. ($ii$) We present two different MILP models to find optimal solutions to the corresponding path planning problem. The two MILP models constitute novel constraints to address the cycle length limits, which could be applied to any general routing problem involving constraints on cycle length. ($iii$) A heuristic via assignment-tree search is presented that produces good sub-optimal solutions, and it could be easily generalized to address different cost functions and/or constraints. ($iv$) We test the algorithms on several random instances and computational results are presented. 

The PISR routing problem is closely related to the distance constrained and fuel constrained vehicle routing problems \cite{karavrp, sundarfuelacc16}. This problem differs from these as follows: rather than the constraints being dictated by the UAV, the constraints on a tour are dictated by the tasks a UAV visits, which is a harder constraint to deal with. Also the cost function chosen here to capture the latency requirements is different from the cost of distance constrained VRPs.

\section{Preliminaries and Assumptions}
Here, we aim to optimize the total cost of the persistent ISR mission by centralized planning. We are looking at solving the problem before the mission begins and assigning the tasks for each UAV/agent to perform in a pre-specified sequence. Ideally, in a persistent routing scenario, the objective is to optimize the chosen metrics over an infinite time horizon. To plan the mission and schedule tasks to be serviced by each UAV, we need to generate an infinite sequence of visits for each UAV, which is infeasible. To overcome this, one may periodically solve a receding horizon problem, generate the task sequences and schedule the UAVs as proposed in \cite{stumpmulti}. However, reliable UAV-to-UAV communication links would be required, as well as the precise location of all the UAVs each time the planner is executed. 

A critical assumption we make for \textit{a-priori} mission planning is the following: we restrict each task to be serviced by the same UAV throughout the mission. Each UAV performs the tasks assigned and returns to the control station, and repeats exactly the same sequence throughout the mission. For example consider a mission with two UAVs and five tasks $\{t_1, t_2, t_3, t_4, t_5\}$, and let $t_d$ represent the control station (or depot). A sample assignment for two vehicles $V_1$ and $V_2$ is as follows: $V_1: t_d \rightarrow t_1 \rightarrow t_2$ and $V_2: t_d \rightarrow t_3 \rightarrow t_4 \rightarrow t_5$. Here, if the time of travel for the sequence $t_d \rightarrow t_1 \rightarrow t_2 \rightarrow t_d$ is $R_1$ seconds, then tasks $t_1$ and $t_2$ are serviced once every $R_1$ seconds.
\begin{figure}[htpb]
\centering
\includegraphics[width=0.8\columnwidth]{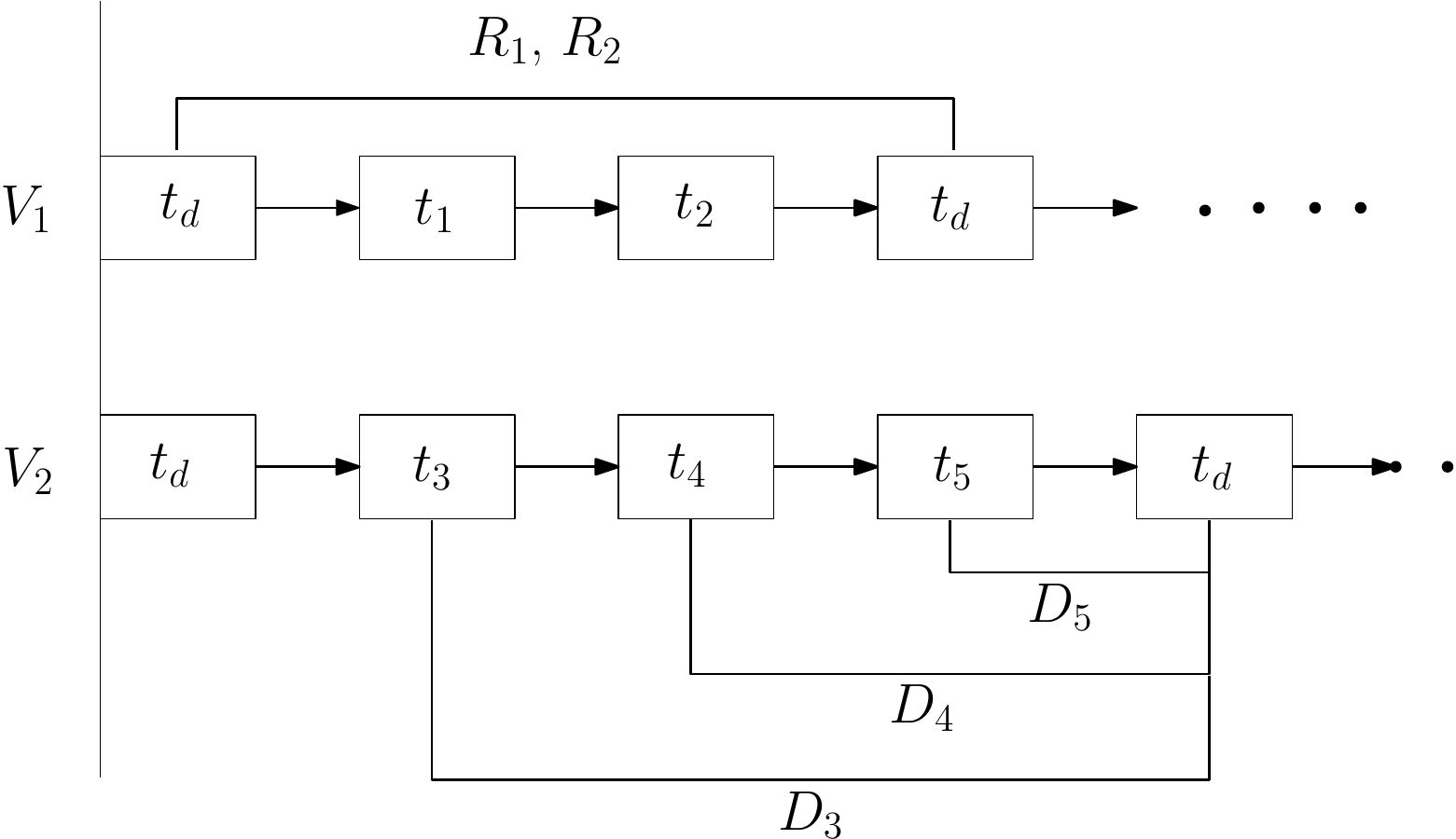}
\caption{Tasks schedule for two vehicle scenario}
\label{fig:tasks}
\end{figure}

Some of the advantages of this class of solutions are the following: we do not need to have communication between UAVs to update the scheduled tasks at each planning time interval. Whenever a UAV breaks down or needs refueling, the operator precisely knows which tasks are not being serviced, such that a contingency plan could be scheduled. The revisit period for each task is exactly known based on the tasks and the sequence assigned to the UAVs. Also under this restriction, the data that is collected from a task is delivered to the control station before its next visit. This is not guaranteed in the unrestricted case, where a task could be serviced by two different UAVs at successive visits. Along with the above advantages, there is a shortcoming; the cost one would optimize with this restriction could be different from the cost of the unrestricted case. However, due to its advantages in planning the mission and implementing, we pursue the restricted case where each tasks is assigned to one of the UAVs, and it is serviced by the same UAV throughout the mission.

There are two important metrics that needs to be addressed in PISR missions. The first one is the data delivery time or data latency ($D_i$) for each task $t_i$; $D_i$ is the elapsed time from when a task is completed until the vehicle returns to the depot. This is not the direct travel time between the task and the depot, as the vehicle may service other tasks before returning to depot. This is illustrated in Fig. \ref{fig:tasks}, the delivery time for the tasks $t_3$, $t_4$ and $t_5$ are shown as $D_3$, $D_4$ and $D_5$ respectively. 

The other metric that we consider is the revisit period of each task. It is the time between two successive visits of a task by an UAV. Since each UAV visits the same set of tasks and repeats, the revisit period is the same for every successive visit. The revisit periods $R_1$ and $R_2$ are illustrated in Fig. \ref{fig:tasks} for the tasks $t_1$ and $t_2$. Based on the importance or risk levels, some of the tasks require higher revisit rates than others. We aim to solve the PISR routing problem where a maximum limit on revisit period, $R_i$,  is specified for each task $t_i$. We want the data delivered at the control station to be as fresh as possible, which requires the delivery times to be as small as possible. To accomplish this, we minimize the maximum delivery time of all the tasks: $\min \max_{i \in T} D_i$. This cost is different from minimizing total cost of the paths, and it is a measure of the delivery time of the first task that is serviced.

\section{Problem Formulation} \label{sec:prbfrm}
In this section, we define the path planning problem for the PISR missions in more detail. Also we present a mixed integer linear programming (MILP) formulation to find the optimal solution to the path planning problem. We model the MILP using node based and arc based formulations; these models are akin to the models in \cite{fischettitsptw} and \cite{karavrp} used to solve the traveling salesman problem with time windows and the distance constrained vehicle routing problem. Similar formulations were also used to solve fuel constrained multiple vehicle routing problem in \cite{sundarfuelacc16}. In these articles, the constraints on the length of a tour starting from a depot are constrained. In the formulation presented here, the length of a tour starting from a depot to each task and the length starting from the task returning to the depot together are constrained. The novelty of this formulation lies in modeling and constraining these two lengths together. Also this could be applied to other routing problem which requires to handle cycle lengths such as min-max traveling salesman problem.

Let $T = \{t_1, t_2, \ldots t_n \}$ represent a set of tasks, and as in the previous example $d$ be the index referring to the depot. We define the problem on a graph $G(V, E)$. $V$ is the set of nodes $V =T \cup \{d\}$, and $E$ is the set of edges between every pair of nodes in $V$. Let $n_v$ represent the number of UAVs available for the mission. The problem can be stated as the following: find at most $n_v$ cycles that minimizes the maximum delivery time such that, $(i)$ each task $T$ is covered by one cycle, and $(ii)$ if a task $t_i$ assigned to one of the UAVs, $v$, with cycle length $L_v$, then $L_v \le R_i, \, \forall i \in T$. 

\subsection{MILP Formulation}
In the MILP formulation, we use a set of binary variables $x_{ij}$'s and two sets of real variables $u_i$'s and $v_i$'s. Each variable $x_{ij}$ corresponds to an edge $(i,j)$, and $x_{ij}=1$ if edge $(i,j)$ is in any of the UAV cycles.  Otherwise, $x_{ij}=0$. For a particular cycle (assignment), the variables $u_i$'s denote the travel time from the depot to the task $t_i$, and $v_i$'s represent the return travel time along the cycle from task $t_i$ to the depot; this is illustrated in Fig. \ref{fig:uv_vars}. Let $c_{ij}$ represent the time elapsed from task $t_i$ to task $t_j$. Here, $c_{ij}$ includes the time of travel between the tasks and the time to perform task $t_j$. Now we present the MILP formulation using degree constraints, sub-tour elimination constraints (SEC) and revisit period constraints. \\

\noindent \emph{Degree constraints:} 
\begin{align}
& \sum_{j \in V}x_{ij} =1 \mbox{ and } \sum_{j \in V}x_{ji} =1, \,\, \forall i \in T, \label{f1cons:deg1} \\
& \sum_{j \in T}x_{dj} \le n_v \mbox{ and } \sum_{j \in T}x_{jd} \le n_v, \label{f1cons:depdeg} \\ 
& x_{ij} \in \{0,1 \} \,\, \forall (i,j) \in E. \label{f1cons:xbin}
\end{align} 
Constraints in Equation (\ref{f1cons:deg1}) state that for every node representing a task, there should be one incoming edge and one outgoing edge. The constraints (\ref{f1cons:depdeg}) state that there should be a maximum of $n_v$ number of incoming and outgoing edges for the depot node; and the binary constraints on the $x_{ij}$ variables are in (\ref{f1cons:xbin}).

\begin{figure}[htpb]
\vspace{0.2cm}
\centering
\includegraphics[width=0.9\columnwidth]{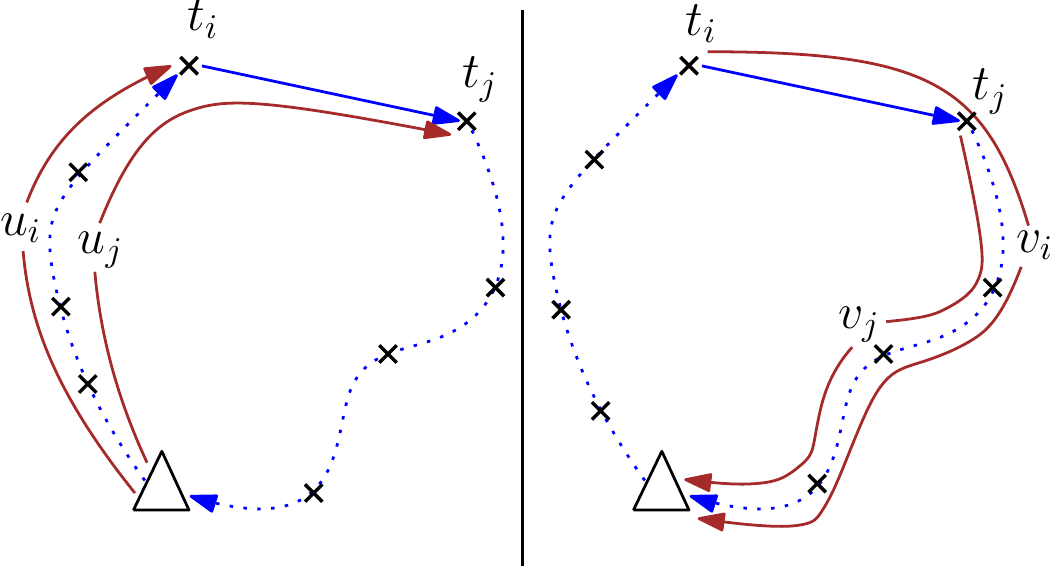}
\caption{Illustrating the variables $u_i$'s and $v_i$'s. Note that, the figure shows one cycle corresponding to one of the UAV paths, which is only a part of the multi UAV tour for the PISR routing problem.}
\label{fig:uv_vars}
\end{figure}

\noindent \emph{Sub-tour elimination constraints:}
\begin{align}
& u_i - u_j + c_{ij} \le M(1-x_{ij}), \,\, \forall i \in V,\, j \in T \label{f1cons:secu1} \\
& c_{di} \le u_i \le R_i-c_{id}, \,\, \forall i \in T, \label{f1cons:secu2} \\
& v_j - v_i + c_{ij} \le M(1-x_{ij}), \,\, \forall i \in T, \, j \in V \label{f1cons:secv1} \\
& c_{id} \le v_i \le R_i-c_{di}, \,\, \forall i \in T. \label{f1cons:secv2} 
\end{align} 
With just the degree constraints, the MILP may produce solutions containing sub-tours; a sub-tour is an assignment where a subset of tasks are connected as a cycle, but are isolated from the depot. One may refer to \cite{applegatebook} for further reading on sub-tours. One can remove these infeasible solutions using the sub-tour elimination constraints. To this end, we use the inequalities (\ref{f1cons:secu1}, \ref{f1cons:secu2}) similar to the \emph{MTZ}-constraints used to solve traveling salesman problem\cite{mtz,applegatebook}.
If an edge $(i,j)$ is present in the tour, then the value of the variable $u_j$ should be at least  the sum of travel time from depot to task $t_i$ and the travel time from task $t_i$ to $t_j$. If $x_{ij}=1$, then $u_j \ge u_i +c_{ij}$. This is enforced by the constraints in (\ref{f1cons:secu1}), where $M$ (referred to as big-$M$ in literature) is a constant of arbitrarily high value. When $x_{ij}$ is zero, the constraint (\ref{f1cons:secu1}) is trivially satisfied. For every $t_i$, the minimum value of $u_i$ is the direct travel time from the depot to the task, and the maximum is $R_i - c_{id}$. These lower and upper limits on $u_i$'s are imposed by (\ref{f1cons:secu2}). Inequalities \eqref{f1cons:secv1}, \eqref{f1cons:secv2} are equivalent to \eqref{f1cons:secu1}, \eqref{f1cons:secu2}, using the variables $v_i$'s instead. Inequality (\ref{f1cons:secv1}) states that, when $x_{ij}=1$, then $v_j$ should be less then $v_i - c_{ij}$. Either the set of constraints \eqref{f1cons:secv1}, \eqref{f1cons:secv2} or \eqref{f1cons:secu1}, \eqref{f1cons:secu2} are sufficient to eliminate the sub-tours, but we need both of these to formulate the revisit period constraint, which need both sets of variables, $u_i$'s and $v_i$'s. \\

\noindent \emph{Revisit period constraints:}
\begin{align}
& u_i + v_j \le R_i, \, \forall i \in T. \label{f1cons:revprd}
\end{align} 
For each task, $u_i$ is the time of travel from the depot to the task $t_i$, and $v_i$ is the return time of travel from the task to the depot. Hence, the sum of these two variables gives the time of travel of the full cycle which covers the task $t_i$. Therefore, inequalities (\ref{f1cons:revprd}) enforce the revisit period constraints for all of the tasks in $T$. \\

\noindent \emph{Objective:}
\begin{align}
&\quad\mbox{Minimize }  z \nonumber  \\
& v_{i} \le z, \,\, \forall i \in T. \label{f1cons:maxvi}
\end{align}
The variables $v_i$'s also are equal to the delivery time of the data collected from each task. To minimize the maximum of all the delivery time, we introduce an auxiliary variable $z$, which is needed to formulate the $\min$-$\max$ objective. The objectives $\min z$ and the inequality (\ref{f1cons:maxvi}) together minimize the maximum of all of the delivery times ($v_i$'s).

The MILP formulation for the PISR routing problem is the following:
\begin{flalign}
&(\mathcal{F}_1) \quad\mbox{Minimize }  z \nonumber  \\
&\mbox{subject to: (\ref{f1cons:deg1}) - (\ref{f1cons:maxvi})}  \nonumber 
\end{flalign}

In the above formulation, the big-$M$ in the constraints \eqref{f1cons:secu1}, \eqref{f1cons:secv1} is known to cause computational problems \cite{fischettitsptw, melkoteintegrated}, and hence make the MILP model computationally less efficient. We propose a second formulation without big-$M$ constraints and compare the computational performance of these two formulations.

\subsection{Formulation based on arcs ($\mathcal{F}_2$)}

Here, we use the binary variables $x_{ij}$'s similar to the previous formulation, and the real variables $y_{ij}$'s and $w_{ij}$'s  $\forall i,j \in V$ are used instead of $u_i$'s and $v_i$'s.   Variables $y_{ij}$ represent the travel time from depot to the task $t_j$ if the edge $(i,j)$ is selected in the assignment, \emph{i.e.,} $x_{ij} = 1$.  Also, when $x_{ij}=1$ the variable $w_{ij}$ is equal to the return travel time from $t_i$ to the depot. For each task $t_i$, only one of the variables in the set $y_{ij}, \, j \in T$ and one of the variables in the set  $w_{ij}, \, j \in T$'s are non-zero. The arc based MILP formulation to solve the PISR routing problem is the following:

\begin{flalign}
&(\mathcal{F}_2) \quad\mbox{Minimize }  z \nonumber  \\
&\mbox{subject to} \nonumber \\
& \sum_{j \in V}x_{ij} =1 \mbox{ and } \sum_{j \in V}x_{ji} =1, \,\, \forall i \in T, \label{cons:deg1} \\
& \sum_{j \in T}x_{dj} \le n_v \mbox{ and } \sum_{j \in T}x_{jd} \le n_v, \label{cons:depdeg} \\ 
& \sum_{j \in V}y_{ij} - \sum_{j \in V}y_{ji} = \sum_{j \in V}c_{ij}x_{ij}, \,\, \forall i \in T, \label{cons:secy1} \\
& y_{di} = c_{di}x_{di}, \,\, \forall i \in T, \label{cons:secy2} \\
& 0 \le y_{ij} \le R_j x_{ij}, \,\, \forall i \in V, \, j \in T, \label{cons:secy3} \\
& \sum_{j \in V}w_{ji} - \sum_{j \in V}w_{ij} = \sum_{j \in V}c_{ji}x_{ji}, \,\, \forall i \in T, \label{cons:secw1} \\
& w_{id} = c_{id}x_{id}, \,\, \forall i \in T, \label{cons:secw2} \\
& 0 \le w_{ij} \le D_i x_{ij}, \,\, \forall i \in T,\, j \in V \label{cons:secw3} \\
& \sum_{j \in V}y_{ji} + \sum_{j \in V} w_{ij} \le R_i, \,\, \forall i \in T, \label{cons:revprd} \\
& w_{ij} \le z, \,\, \forall i \in T, \, j \in V, \label{cons:maxdt} \\
& x_{ij} \in \{0,1 \} \,\, \forall (i,j) \in E. \label{cons:xbin}
\end{flalign}

\begin{figure}[htpb]
\vspace{0.2cm}
\centering
\includegraphics[width=0.9\columnwidth]{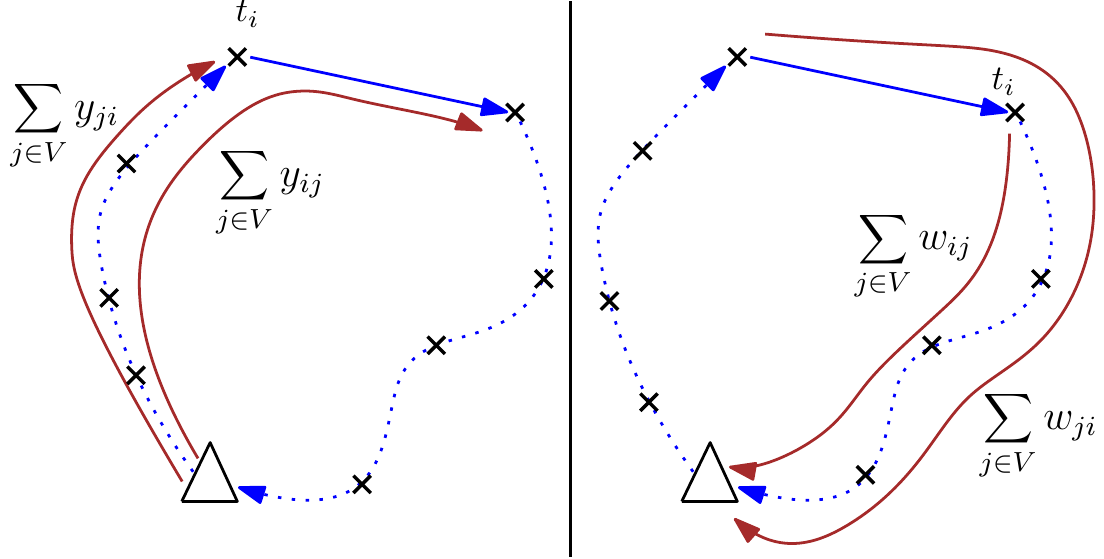}
\caption{Illustrating the variables $y_{ij}$'s and $w_{ij}$'s}
\label{fig:yij}
\end{figure}

Here, constraints in the Equation (\ref{cons:deg1}) are the degree constraints, which enforce that only one incoming and one outgoing edge exists for any task. Equation (\ref{cons:depdeg}) imposes the maximum number of edges going out of and coming into depot to be $n_v$. The time elapsed from leaving the depot to the end of the task $t_i$ is given by the summation $\sum_{j \in V}y_{ji}$, and the time from leaving the depot to the end of the task that is serviced after $t_i$ is given by the summation $\sum_{j \in V}y_{ij}$; these two summation are illustrated in Fig. \ref{fig:yij}. Constraints (\ref{cons:secy1}) and (\ref{cons:secy2}) ensure that the difference between these two should be equal to the time between $t_i$ and task serviced after $t_i$. These constraints are needed to eliminate the sub-tours. Equations (\ref{cons:secy3}) ensures the variables $y_{ij}$'s are non-negative always, and nonzero only when $x_{ij}=1$. Constraints in  (\ref{cons:secw1}) are the counterpart of (\ref{cons:secy1}), where the summations $\sum_{j \in V}w_{ij}$ and $\sum_{j \in V}w_{ji}$ are the return travel times to the depot from the task $t_i$  and the task serviced before $t_i$ respectively. The difference between these two should be equal to the time between these two tasks, and this is imposed by constraints in (\ref{cons:secw1}) and (\ref{cons:secw2}). The non-negative and non-zero constraints on variables $w_{ij}$'s are enforced by (\ref{cons:secw3}). The set of constraints \eqref{cons:secy1} - \eqref{cons:secy3} or \eqref{cons:secw1} - \eqref{cons:secw3} are sufficient to eliminates the sub-tours, however we need both of these to implement the revisit rate constraints.

The summation $\sum_{j \in V}y_{ji}$ is the time of travel for a UAV from the depot to the task $t_i$, and the summation $\sum_{j \in V} w_{ij}$ is the return travel time from the task $t_i$ to the depot. If UAV$_v$ is visiting task $t_i$, then these two summations adds up to the total time elapsed to service all the tasks assigned to UAV$_v$. Constraints (\ref{cons:revprd}) enforces the maximum limit on the revisit period for each of the tasks. The variables $w_{ij}$'s are the delivery time of the tasks, hence the objective function $\min z$ and the inequalities (\ref{cons:maxdt}) together minimizes the maximum delivery time.

\begin{proposition}
Inequalities \eqref{cons:secy2} - \eqref{cons:secy3} and \eqref{cons:secw2} - \eqref{cons:secw3} in formulation $\mathcal{F}_2$ can be strengthened using the following inequalities:
\begin{flalign}
& y_{ij} \le (R_j-c_{jd})x_{ij}, \,\, \forall i,j \in V, \label{cons:streny1} \\
& y_{id} \le R_ix_{id}, \,\, \forall i \in V, \label{cons:streny2} \\
& y_{ij} \ge (c_{di} + c_{ij})x_{ij}, \,\, \forall i,j \in V,, \label{cons:streny3} \\
& w_{ij} \le (R_i-c_{di})x_{ij}, \,\, \forall i,j \in V,, \label{cons:strenw1} \\
& w_{di} \le R_ix_{di}, \,\, \forall i \in V, \label{cons:strenw2} \\
& w_{ij} \ge (c_{ij} + c_{jd})x_{ij}, \,\, \forall i,j \in V. \label{cons:strenw3}
\end{flalign}
\end{proposition}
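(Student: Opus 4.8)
The plan is to prove two things: that the six inequalities are \emph{valid} — every integer-feasible solution of $\mathcal{F}_2$ satisfies them — and that, once they replace \eqref{cons:secy2}--\eqref{cons:secy3} and \eqref{cons:secw2}--\eqref{cons:secw3}, the resulting formulation has a tighter LP relaxation while keeping the same integer-feasible set. First I would fix a feasible integer solution and read off, on each cycle it induces, the meaning forced by the flow-balance constraints \eqref{cons:secy1} and \eqref{cons:secw1}: whenever $x_{ij}=1$, the variable $y_{ij}$ equals the elapsed time from the depot up to and including $t_j$, and $w_{ij}$ equals the elapsed return time from $t_i$ back to the depot. With this interpretation the new bounds become statements about how the forward and return portions of a single cycle split up.

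For the tightened upper bounds I would split the cycle through $t_j$ into its forward part (depot $\to t_j$, of length $y_{ij}$) and its return part (of length at least the direct hop $c_{jd}$), and apply the revisit constraint \eqref{cons:revprd}, which caps the whole cycle at $R_j$; subtracting the return part gives $y_{ij}\le R_j-c_{jd}$, namely \eqref{cons:streny1}, and the mirror split on the return side yields \eqref{cons:strenw1}. The depot-indexed cases \eqref{cons:streny2} and \eqref{cons:strenw2} follow identically: when $x_{id}=1$ (resp. $x_{di}=1$) the variable equals the \emph{entire} cycle length, which is $\le R_i$ because $t_i$ lies on that cycle. For the lower bounds I would use that, when $x_{ij}=1$, the forward time to $t_j$ is the forward time to $t_i$ plus $c_{ij}$, and the forward time to $t_i$ is at least the direct hop $c_{di}$; this gives $y_{ij}\ge c_{di}+c_{ij}$, i.e. \eqref{cons:streny3}, and the return-side analogue gives \eqref{cons:strenw3}. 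In every case $x_{ij}=0$ zeroes both sides, so the inequalities hold off-support.

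To establish that these are genuine strengthenings I would first note that validity already shows the integer set is preserved: every feasible $x$ admits its \emph{true} partial-cycle values, which satisfy all six inequalities. I would then compare boxes: on interior edges ($i,j\in T$) the pair \eqref{cons:streny1},\eqref{cons:streny3} confines $y_{ij}$ to $[(c_{di}+c_{ij})x_{ij},(R_j-c_{jd})x_{ij}]$, strictly inside the original box $[0,R_j x_{ij}]$ of \eqref{cons:secy3}, and symmetrically for $w$; moreover \eqref{cons:streny2} and \eqref{cons:strenw2} bound the into-/out-of-depot variables $y_{id}$ and $w_{di}$, which the ranges $j\in T$ of \eqref{cons:secy3} and $i\in T$ of \eqref{cons:secw3} left free. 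Finally, to certify that dropping the boundary equalities \eqref{cons:secy2},\eqref{cons:secw2} admits no length-violating cycle, I would telescope \eqref{cons:secy1} around an arbitrary cycle and combine it with the new depot bounds to force the cycle length below $R_i$ for every task on it.

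The step I expect to be the main obstacle is justifying that the complementary portion of a cycle is bounded below by the direct edge cost — that the return from $t_j$ is at least $c_{jd}$ and the approach to $t_i$ is at least $c_{di}$. This is immediate when the remaining portion reaches the depot directly, but for multi-hop remainders it relies on the time matrix $c_{ij}$ satisfying the triangle inequality (natural for travel-plus-service times); I would state that assumption explicitly. I would also take care with the degenerate depot indices $i=d$ or $j=d$, where $c_{dd}=0$ must be used so that \eqref{cons:streny3} and \eqref{cons:strenw3} collapse correctly to the original boundary relations and no spurious bound is introduced.
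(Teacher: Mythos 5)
Your proof is correct, and its core --- reading off the meaning of $y_{ij}$ and $w_{ij}$ from the flow-balance constraints, splitting the cycle through the task in question into a forward and a return portion, and combining the revisit-period constraint \eqref{cons:revprd} with a direct-hop bound on the complementary portion --- is exactly the paper's argument, which is given only as informal prose. You go beyond the paper in two useful ways and make one misreading. First, you correctly flag that the claims ``the return from $t_j$ costs at least $c_{jd}$'' and ``the approach to $t_i$ costs at least $c_{di}$'' require the cost matrix to satisfy the triangle inequality when the complementary portion is multi-hop; the paper uses this silently (it holds in their setting of Euclidean travel times plus nonnegative service times), but without it \eqref{cons:streny1}, \eqref{cons:streny3}, \eqref{cons:strenw1}, \eqref{cons:strenw3} can cut off feasible integer solutions, so stating the assumption is a genuine improvement. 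Second, you formalize ``strengthened'' as box containment plus preservation of the $x$-projection of the integer-feasible set, whereas the paper only remarks that \eqref{cons:streny1} is tighter than \eqref{cons:secy3} because $c_{jd}\ge 0$; your observation that \eqref{cons:streny2} and \eqref{cons:strenw2} bound the depot-indexed variables $y_{id}$ and $w_{di}$, which the index ranges of \eqref{cons:secy3} and \eqref{cons:secw3} leave entirely unconstrained, is the precise content of the paper's brief remark that \eqref{cons:streny2} ``is the same as \eqref{cons:secy3} where $j$ is the depot index.'' The misreading: you treat the proposition as licensing removal of the boundary equalities \eqref{cons:secy2} and \eqref{cons:secw2} and devote your final step to certifying that this is safe; in fact the paper's strengthened formulation $\mathcal{F}_3$ retains \eqref{cons:secy2} and \eqref{cons:secw2} (only \eqref{cons:secy3}, \eqref{cons:secw3} and \eqref{cons:maxdt} are replaced), so that telescoping step is unnecessary --- though your sketch of it is sound and does establish the stricter claim.
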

\begin{proof}
When $j$ is not the depot index, sum of the time from leaving the depot to the end of task $t_j$ and $c_{jd}$, direct travel time from the end of task $t_j$ to the depot should be less than the revisit period limit for the task $t_j$. This inequality is expressed in (\ref{cons:streny1}), and since $c_{jd}$ is non-negative, (\ref{cons:streny1}) is tighter than (\ref{cons:secy3}). Inequality (\ref{cons:streny2}) is same as (\ref{cons:secy3}) where $j$ is the depot index. Inequality (\ref{cons:streny3}) indicates that when $i$ and $j$ are not depot indices and $x_{ij}=1$, then the time to travel from depot to $t_j$ should be at least  the sum of time to travel from depot to $t_i$ and time of travel from $t_i$ to $t_j$.  

Inequalities (\ref{cons:strenw1}) - (\ref{cons:strenw3}) are counterpart of the inequalities (\ref{cons:streny1}) - (\ref{cons:streny3}) for the $w_{ij}$ variables. Inequality (\ref{cons:strenw1}) states that, when $x_{ij}=1$, sum of the direct time of travel from depot to $t_i$ and time from $t_i$ to depot along the cycle should be less than the revisit period limit of $t_i$. When $x_{ij}=1$, $w_{ij}$ is the time from end of task $t_i$ to returning to the depot, which should be at least the sum of time to travel from $t_i$ to $t_j$ and time to travel from $t_j$ to the depot; this is enforced by (\ref{cons:strenw3}).
\end{proof}

\begin{proposition}
To minimize the maximum delivery time, inequalities (\ref{cons:maxdt}) can be replaced with the following inequalities:
\begin{flalign}
w_{di} - c_{di} \le z, \,\, \forall i \in T. \label{cons:strenmaxdt}
\end{flalign}
\end{proposition}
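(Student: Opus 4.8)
The plan is to show that, for any feasible integer solution, the single nonzero depot-outgoing variable $w_{di}$ (the one with $x_{di}=1$) equals the full length of its UAV cycle, so that $w_{di}-c_{di}$ is exactly the delivery time of the \emph{first} task serviced in that cycle, and then to argue that the first task always attains the maximum delivery time among the tasks on its cycle. Combining these two facts shows that clamping only the quantities $w_{di}-c_{di}$ with $z$ already clamps every delivery time, so the replacement of (\ref{cons:maxdt}) by (\ref{cons:strenmaxdt}) leaves the optimal value of $z$ unchanged.

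First I would fix a feasible solution and isolate one cycle $d \to t_{i_1} \to t_{i_2} \to \cdots \to t_{i_k} \to d$, writing $D_{i_m}$ for the delivery time of $t_{i_m}$, i.e. the return travel time from $t_{i_m}$ to the depot along the remainder of the cycle. The selected outgoing variable at task $i_m$ carries this value. By (\ref{cons:secw2}) the last task gives $w_{i_k d}=c_{i_k d}=D_{i_k}$, and applying the flow-balance constraint (\ref{cons:secw1}) at an interior task, whose incoming minus outgoing $w$ equals the cost of its incoming edge, yields the recursion $D_{i_{m-1}} = D_{i_m}+c_{i_{m-1} i_m}$. Applying (\ref{cons:secw1}) at the first task $t_{i_1}$, whose only incoming edge is $(d,i_1)$, gives $w_{d i_1}-D_{i_1}=c_{d i_1}$, hence $w_{d i_1}-c_{d i_1}=D_{i_1}$. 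Thus $w_{di}-c_{di}$ equals precisely the first task's delivery time whenever $x_{di}=1$.

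Next I would exploit the same recursion: since every $c_{i_{m-1} i_m}\ge 0$, we get $D_{i_1}\ge D_{i_2}\ge\cdots\ge D_{i_k}$, so within each cycle the first task has the largest delivery time. Consequently $\max_{i\in T}D_i = \max_{\{i:\,x_{di}=1\}}\bigl(w_{di}-c_{di}\bigr)$. Now (\ref{cons:strenmaxdt}) imposes $w_{di}-c_{di}\le z$ for every $i\in T$; on the indices with $x_{di}=1$ these reproduce exactly the binding bounds just derived, and because the objective drives $z$ down to this maximum, the model with (\ref{cons:strenmaxdt}) returns $\min z = \max_{i\in T}D_i$, which is the same value produced by (\ref{cons:maxdt}), whose nonzero task-outgoing $w_{ij}$ already equal the $D_i$.

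The main obstacle is the degenerate case $x_{di}=0$, for which (\ref{cons:strenmaxdt}) still writes $w_{di}-c_{di}\le z$ and must not create a spurious bound. Here I would invoke that an unused depot edge forces $w_{di}=0$: the upper bound $w_{di}\le R_i x_{di}$ of (\ref{cons:strenw2}) collapses to $w_{di}\le 0$, while $w_{di}\ge 0$, so the inequality reduces to $-c_{di}\le z$, which is slack since $z\ge 0$ and $c_{di}\ge 0$. Hence the non-first-task constraints impose nothing, and the replacement is exact. I would close by remarking that (\ref{cons:strenmaxdt}) comprises only $|T|$ inequalities in place of the $|T|\cdot|V|$ inequalities of (\ref{cons:maxdt}), which is exactly the computational payoff of the tightened form.
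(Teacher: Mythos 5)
Your proof is correct and follows essentially the same route as the paper's: identify $w_{di}-c_{di}$ as the delivery time of the first task on a cycle, note that the first task attains the maximum delivery time since edge costs are non-negative, and observe that for non-first tasks $w_{di}=0$ makes (\ref{cons:strenmaxdt}) trivially slack. You merely fill in, via the flow-balance recursion, the details the paper dismisses with ``clearly,'' which is a welcome but not substantively different elaboration.
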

\begin{proof}
Clearly, the first task serviced has the highest delivery time. If $t_i$ is the first task, then $w_{di} - c_{di} $ is the delivery time for $t_i$. Therefore, the objective $\min z$ along with inequalities (\ref{cons:strenmaxdt}) minimizes the maximum delivery time. Note that, if $t_i$ is not the first task visited, then $w_{di}=0,$ and (\ref{cons:strenmaxdt}) is trivially satisfied.
\end{proof}

We present the strengthened arc based formulation as follows:
\begin{flalign}
&(\mathcal{F}_3) \quad\mbox{Minimize }  z \nonumber  \\
&\mbox{subject to} \nonumber \\
& (\ref{cons:deg1}) - (\ref{cons:secy2}), (\ref{cons:secw1}),(\ref{cons:secw2}), (\ref{cons:revprd}), \mbox{and} (\ref{cons:xbin}) - \ref{cons:strenmaxdt}).
\end{flalign}

\section{Assignment Tree Search Heuristic}
In this section, we present a heuristic to solve the PISR routing problem. The heuristic is a greedy assignment tree search, based on the prior work in \cite{karamanplanning,rasmussentree}, for planning missions involving multiple UAVs. Here, we present a synopsis of the tree search algorithm, however one can refer to \cite{karamanplanning,rasmussentree} for further details. This tree search follows a best first search pattern until it finds a feasible assignment. At the root node, the algorithm creates branches and a child node at each branch. Each child corresponds to an assignment of one of the tasks to one of the available UAVs. The number of child nodes are all possible ways to select one unassigned task and assign it to a UAV. Among all the child nodes, the algorithm selects the node with the least cost, and repeats the branching process similar to the branching at root node. This process repeats until all the tasks are assigned.  At this point in the search the algorithm arrives at a "leaf" node, which corresponds to a feasible solution to the planning problem. Once, a feasible assignment is found the algorithm stores the solution as an incumbent solution. It proceeds to search the tree by evaluating the unexplored child nodes, and tries to find solutions of lower cost than the incumbent  solution. Also, the tree search prunes the branches before reaching a leaf node, if the current cost at the branch is more than the incumbent solution. The search is terminated either when it finds a feasible solution, or if it reaches a pre-specified maximum number of nodes to be explored. 

We use this tree search heuristic to find feasible paths for the PISR routing problem. The algorithm is adapted to find feasible paths, such that the cycle lengths of each UAV adheres to the revisit period constraints of the tasks the UAV is assigned, and minimizes the maximum delivery time of all the tasks.\\
\emph{Tree search heuristic:}
\begin{enumerate}[1.]
\item Initialize the problem at a root node with the following: the locations of the tasks, time of travel between the tasks, the maximum limit on the revisit period for each task, and the number of UAVs available.
\item \label{step2} Create child nodes ($n_i$), each corresponds to an assignment of a task to an available UAV. Each node corresponds to a list of assignments for each UAV. 
\begin{enumerate}[(i)]
	\item Compute the current cost of each child node $C(n_i)$, which is the maximum of the delivery times of all the tasks. For example, a UAV is assigned tasks in the order $t_{s_1}, t_{s_2}, t_{s_3}$, the maximum of the delivery times is the sum $c_{{s_1}{s_2}} + c_{{s_2}{s_3}} + c_{{s_3}{d}}$. ( \textit{Here, $c_{ij}$ is the sum of the time of travel between $t_i$ and $t_j$ and the time to performs task $t_j$.})
	\item Check if the assignments violates the revisit period constraints of all the tasks assigned so far. For example if the tasks, $t_{s_1}, t_{s_2}, t_{s_3}$ are assigned to an UAV, compute the travel time of the cycle $c_{d{s_1}}+ c_{{s_1}{s_2}} + c_{{s_2}{s_3}} + c_{{s_3}{d}}$ is less than $R_{s_1}$, $R_{s_2}$ and $R_{s_3}$. If any UAV violates the revisit period constraints, assign an infinite cost to the child node.
\end{enumerate}
\item  \label{step3} To select a child node for further branching, we scale the cost based on two factors based on the current task and current UAV that are assigned at each node. The first scale $C_{s_1}$ is to force a task with the lowest revisit period limit to be assigned earliest to an UAV. $C_{s_1} = \frac{R_i}{R_{max}}i$, where $R_j$ is the maximum revisit period limit of the task $t_j$ that is assigned at the current node, and $R_{max}$ is the maximum revisit period limits of all the tasks. The second scale $S_{c_2}$ is to prioritize a UAV which is assigned a task with revisit limits in earlier assignments (at parent or above nodes); $S_{c_2} = 10^{-n_t}$, where $n_t$ is the number of revisit period constrained tasks assigned to the current UAV.
\item Select the child node with the lowest scaled cost $S_{c_1}S_{c_2}C(n_i)$, and repeat the branching, steps \ref{step2} - \ref{step3}, until a leaf node is found. Update the incumbent solution, and proceed to explore the unevaluated child nodes at the parent nodes and further until another leaf node is found. 
\item Exit the tree search when there are no child nodes to be evaluated, or the number of nodes evaluated reaches the specified limit, and output the solution with the lowest cost. 
\end{enumerate}

\section{Computational Results} \label{sec:compres}
The MILP formulations $\mathcal{F}_1$ and $\mathcal{F}_3$ are solved using branch and cut algorithm. The algorithms are implemented using CPLEX (version 12.6) with C++ API. CPLEX solves the MILP using branch and cut algorithm, which generates the feasible solutions (upper bounds) and lower bounds based on a solution to the dual problem iteratively, and outputs the optimal solution when the gap between the lower bound and upper bound converges to zero. All the simulations were run on a Macbook with Intel i5, $2.7$ GHz processor and 8 GB memory. We generated random instances by choosing task locations ($xy$-coordinates) from an uniform distribution in a square grid of size $4000 \times 4000$ meters. We have tested the algorithms on $30$ random instances,  $10$ each with $10$, $20$ and $30$ tasks and $4$ UAVs. We impose the revisit period constraints on $3$, $4$ and $5$ of the tasks for the instances with $10$, $20$ and $30$ tasks respectively. For all the instances, we have chosen the task farthest from the depot, and the revisit period limit is set to $1.1$ times the sum of the time from the depot to the farthest task and the task to the depot. We have selected the nearest $2$, $3$ and $4$ tasks for the instances with $10$, $20$ and $30$ tasks respectively to set the revisit period limits. We set the revisit period limit to be $1.1$ times the optimal cost of the traveling salesman problem solved on these tasks including the depot, with travel times as the cost of travel between tasks. We assume the UAVs travel at unit speed (one meter per second), and the Euclidean distance between the task locations is chosen to be the travel times between them. The tree search heuristic is implemented in C++, and the maximum number of nodes to be evaluated is set to one million. 

The computational results for the $30$ instances are shown in the Table \ref{tab:1}. The first and second column refers to the instance number and the number of tasks in the instance respectively. The third and fourth column refers to the cost of the solution and the computation time required by the formulation $\mathcal{F}_1$. The fifth and sixth column refers to the solution cost and the computation time required for solving using formulation $\mathcal{F}_3$. A time limit of one hour and $2.5$ hours are set for solving the instances with $20$ and $30$ tasks. For the instances where the algorithm could not find optimal solutions in the set time limit, the cost of the best found solution are listed. All the computation times reported are in seconds. The seventh and eighth columns refer to the cost of the first feasible solution found (referred to as best first search solution) and the corresponding computation time by the tree search heuristic.  The ninth and tenth columns refer to the final cost of the heuristic solution and the computation time required after exploring one million nodes of the tree. 

With the formulation $\mathcal{F}_1$, the branch-and-cut algorithm could not converge with in the time limit for instances with $20$ and $30$ tasks. We could find optimal solutions for all the instances with $10$ and $20$ tasks using the formulation $\mathcal{F}_3$, and for $4$ out of $10$ instances with the $30$ tasks. Though tight feasible solutions are found, the lower bounds given by the LP relaxations of these formulations are not tight enough, and therefore the algorithm needs more computation time to converge. Finding better valid inequalities may solve this problem, which can be a future direction of this research. From the computational results, clearly the formulation $\mathcal{F}_3$ outperformed $\mathcal{F}_1$. The tree search heuristic generated a best first search solution within $11$ milliseconds and final solutions with in $11$ seconds for all the instances. Also the cost of these solutions are within $50\%$ of the optimal for most of the instances. This heuristic is well suitable for quick planning and onboard re-planning of the missions. Plots of the solutions found by solving the MILP formulation $\mathcal{F}_3$ and the heuristic for an instance with $20$ tasks are shown in Fig. \ref{fig:solutions}. One can see that the tasks $t_6, \, t_8$ and $t_{14}$ with tight revisit period limits lie on a UAV tour with the smallest tour length. The task $t_{10}$ also has revisit period limits, however the corresponding UAV also visits other tasks without violating the revisit period constraints of $t_{10}$.

\begin{figure}[htpb]
\centering
\subfigure[Optimal PISR solution solved using MILP formulation $\mathcal{F}_3$ ]{\includegraphics[width=0.70\columnwidth]{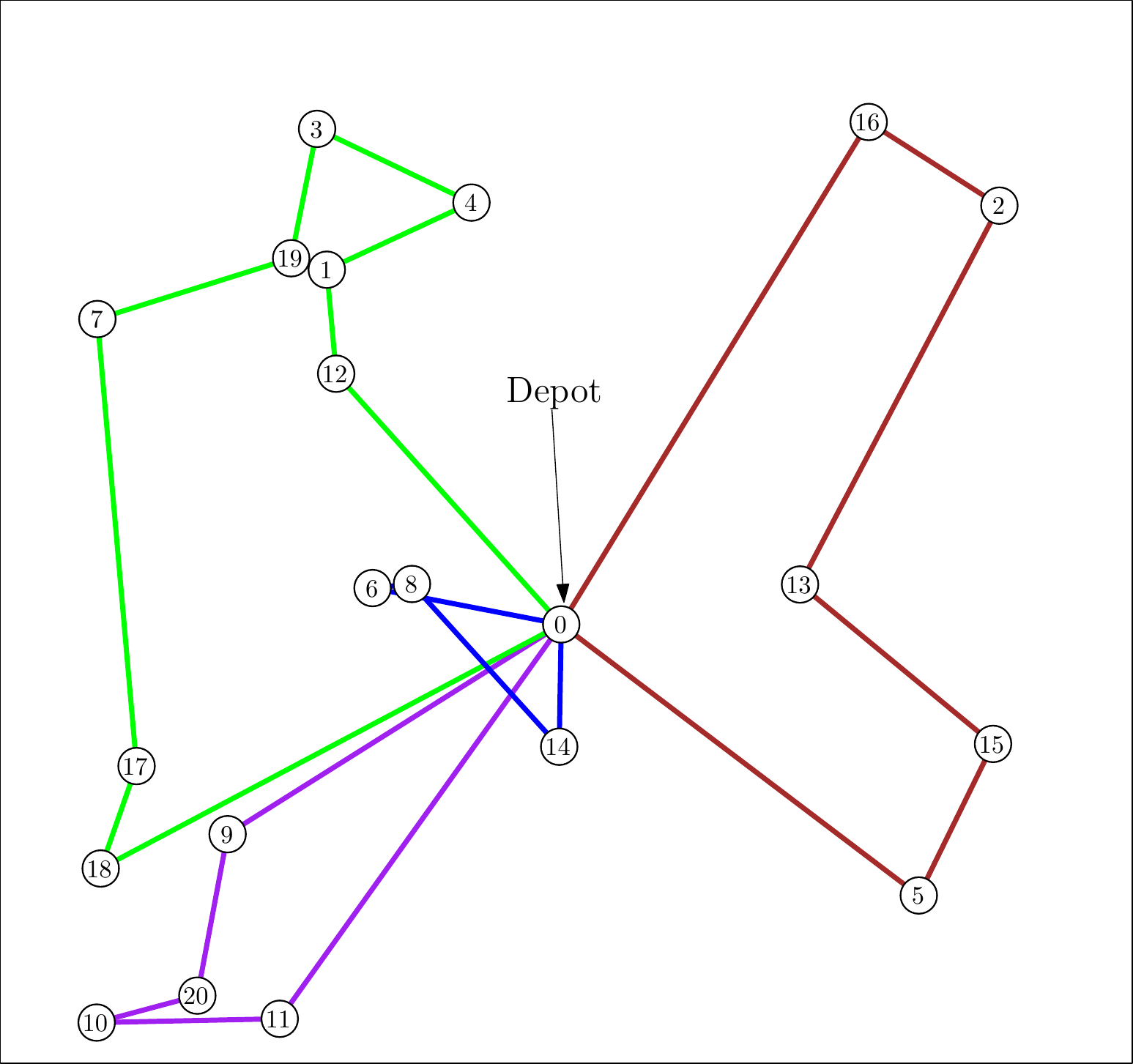}}
\subfigure[Solution produced by the assignment tree search heuristic]{\includegraphics[width=0.80\columnwidth]{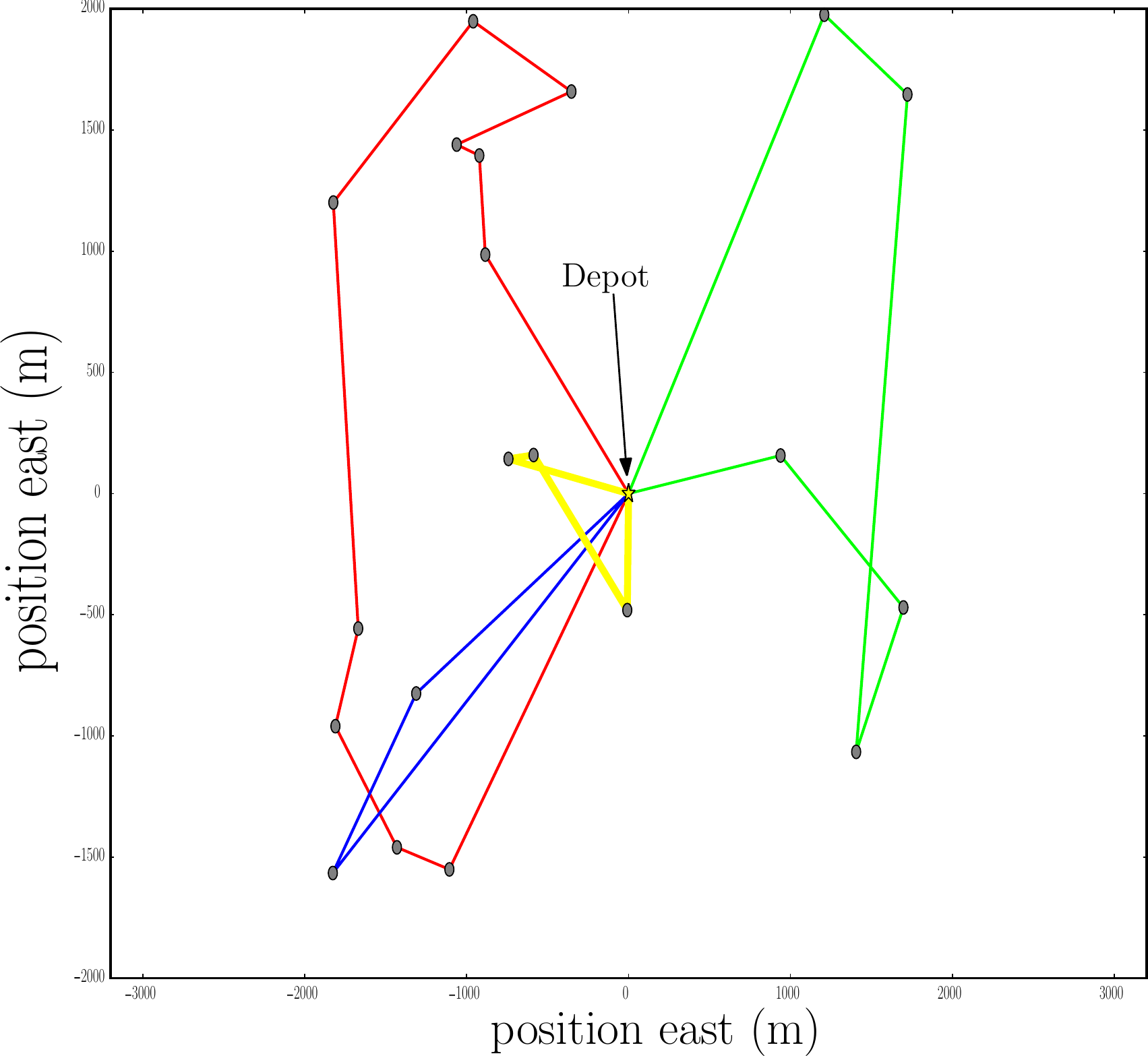}}
\caption{Solutions for an instance with $20$ tasks and $4$ UAVs}
\label{fig:solutions}
\end{figure}

\begin{table*}[htpb]
\vspace{0.2cm}
\centering
\caption{Computational Results using the MILP formulations and the Tree Search Heuristic}
\begin{tabular}{ccrrrrrcrc}
\toprule
Instance \# & $ |T|$ & \multicolumn{2}{|c|}{ $\mathcal{F}_1$} & \multicolumn{2}{|c|}{ $\mathcal{F}_3$}& \multicolumn{4}{|c}{ Tree Search Heuristic} \\
\cmidrule(l){3-10} \\
 & & Cost & CPU time & Cost& CPU time& BFS Cost& BFS CPU time& Final Cost& Final CPU time\\
\midrule
1 & 10 & 2786 & 0.07 & 2786  & 0.31 & 3773 & 0.001 & 2801 & 5.185 \\
2 & 10 & 4598 & 0.06 & 4598 & 0.22 & 5651 & 0.001 & 4598 & 5.255\\
3 & 10 & 4756 & 0.11 & 4756  &  0.99 & 5687 & 0.001 & 4756 & 5.444\\
4 & 10 & 5315 & 0.15 & 5315 &  1.01 & 7705 & 0.001 & 5315 & 5.379\\
5 & 10 & 3751 & 0.09 & 3751  &  0.89 & 4786 & 0.001 & 3751 & 5.295\\
6 & 10 & 3921 & 0.08 & 3921  &  0.29 & 4380 & 0.004 & 3921 & 5.200\\
7 & 10 & 3364  & 0.07 & 3364  &  0.16 & 5219 & 0.001 & 3364 & 5.263\\
8 & 10 & 3972 & 0.06 & 3972  &  0.53 & 7891 & 0.002 & 4515 & 5.275\\
9 & 10 & 3383 & 0.08 & 3383  &  0.18 & 5536 & 0.001 & 3383 & 5.475\\
10 & 10 & 3700 & 0.09 & 3700  &  0.59 & 4825 & 0.002 & 3700 & 5.275\\
11 & 20 & 6860$^*$ & 3600$^*$ & 6500 & 29 & 8043 & 0.005 & 7240 & 8.356\\
12 & 20 & 5260$^*$ & 3600$^*$ & 5260 & 153 & 5857 & 0.004 & 5592 & 8.089\\
13 & 20 & 6700$^*$ & 3600$^*$ & 6510 & 180 & 8993 & 0.004 & 8993 & 8.685\\
14 & 20 & 6740$^*$ & 3600$^*$ & 6680 & 14 & 9823 & 0.006 & 9252 & 8.585\\
15 & 20 & 6540$^*$ & 3600$^*$ & 6540 & 56 & 11704 & 0.005 & 8171 & 8.514\\
16 & 20 & 5960$^*$ & 3600$^*$ &  5960 & 18 & 9518 & 0.004 & 7866 & 8.685\\
17 & 20 & 6040$^*$  & 3600$^*$ & 5930 &  20 & 9220 & 0.005 & 8007 & 8.860\\
18 & 20 & 5800$^*$ & 3600$^*$ & 5800 & 238 & 9832 & 0.004 & 7148 & 8.390\\
19 & 20 & 6080$^*$ & 3600$^*$ & 6080 & 270 & 7992 & 0.005 & 7409 & 8.178\\
20 & 20 & 4100$^*$ & 3600$^*$ & 4100 & 219  & 7056 & 0.005 & 7056 & 9.031\\
21 & 30 & $^{**}$ & $^{**}$ & 7280 & 8623 & 26277 & 0.010 & 23418 & 10.328\\
22 & 30 & $^{**}$ & $^{**}$ & 7290 & 4967 & 12920 & 0.010 & 10682 & 10.585\\
23 & 30 & $^{**}$ & $^{**}$ & 6900$^*$& 10000$^*$ & 22461 & 0.012 & 16536 & 10.356\\
24 & 30 & $^{**}$ & $^{**}$ & 7160 & 8070 & 21070 & 0.010 & 17186 & 10.266\\
25 & 30 & $^{**}$ & $^{**}$ & 6430$^*$ & 10000$^*$ & 9394 & 0.011 & 8911 & 10.545\\
26 & 30 & $^{**}$ & $^{**}$ & 7420$^*$ & 10000$^*$ & 16488 & 0.010 & 15315 & 10.383\\
27 & 30 & $^{**}$ & $^{**}$ & 7190$^*$ & 10000$^*$ & 11785 & 0.010 & 9663 & 10.681\\
28 & 30 & $^{**}$ & $^{**}$ & 7490$^*$ & 10000$^*$ & 11613 & 0.011 & 11613 & 10.530\\
29 & 30 & $^{**}$ & $^{**}$ & 7160 & 4886$^*$ & 10159 & 0.010 & 10159 & 10.542\\
30 & 30 & $^{**}$ & $^{**}$ & 6560$^*$ & 10000$^*$ & 10413 & 0.011 & 10031 & 10.954\\
\bottomrule\\
\multicolumn{10}{l}{ $^*$\textit{Instances are not converged to the optimal solution with in the specified time limit; the best found solution is reported.}} \\
\multicolumn{10}{l}{ $^{**}$\textit{Instances could not find a solution within the set time limit.}} 
\end{tabular}
\label{tab:1}
\end{table*}

\section{Conclusion} \label{sec:conc}
We considered a path planning problem for PISR missions that involves multiple UAVs collecting data from spatially dispersed tasks, and delivering at a depot. We have modeled this as an optimization problem to minimize the maximum delivery time for all the tasks while satisfying the revisit period constraints for the high priority tasks. To find optimal solutions, we presented two MILP formulations $\mathcal{F}_1$ and $\mathcal{F}_3$, which include novel constraints to satisfy revisit period limits. These formulations are solved using branch and cut algorithm, and it could find optimal solutions for instances up to thirty tasks. Also, we presented a heuristic based on assignment tree search; it produces sub-optimal solutions which require only a few seconds of computation time. The heuristic could find feasible solutions for all the instances with in $10$ milliseconds. For the missions where onboard re-planning is necessary due to change in the tasks or locations, this heuristic is well suitable for quick onboard re-planning. The future directions of this research include finding better valid inequalities for the formulations to strengthen the lower bounds for computational efficiency. Also one can develop similar MILP models to find paths that minimizes the weighted sum of the revisit periods of all the tasks.
\bibliographystyle{IEEEtran}
\bibliography{pisracc.bib}

\begin{thebibliography}{10}
\providecommand{\url}[1]{#1}
\csname url@samestyle\endcsname
\providecommand{\newblock}{\relax}
\providecommand{\bibinfo}[2]{#2}
\providecommand{\BIBentrySTDinterwordspacing}{\spaceskip=0pt\relax}
\providecommand{\BIBentryALTinterwordstretchfactor}{4}
\providecommand{\BIBentryALTinterwordspacing}{\spaceskip=\fontdimen2\font plus
\BIBentryALTinterwordstretchfactor\fontdimen3\font minus
  \fontdimen4\font\relax}
\providecommand{\BIBforeignlanguage}[2]{{%
\expandafter\ifx\csname l@#1\endcsname\relax
\typeout{** WARNING: IEEEtran.bst: No hyphenation pattern has been}%
\typeout{** loaded for the language `#1'. Using the pattern for}%
\typeout{** the default language instead.}%
\else
\language=\csname l@#1\endcsname
\fi
#2}}
\providecommand{\BIBdecl}{\relax}
\BIBdecl

\bibitem{schanzreaper}
M.~V. Schanz, ``The reaper harvest,'' \emph{Air Force Magazine}, vol.~94,
  no.~4, pp. 36--39, 2011.

\bibitem{chevaleyrepatrol}
Y.~Chevaleyre, ``Theoretical analysis of the multi-agent patrolling problem,''
  in \emph{International Conference on Intelligent Agent Technology
  (IAT)}.\hskip 1em plus 0.5em minus 0.4em\relax IEEE, 2004, pp. 302--308.

\bibitem{nigampers}
N.~Nigam and I.~Kroo, ``Persistent surveillance using multiple unmanned air
  vehicles,'' in \emph{Aerospace Conference}.\hskip 1em plus 0.5em minus
  0.4em\relax IEEE, 2008, pp. 1--14.

\bibitem{elmaliachareapatrol}
Y.~Elmaliach, N.~Agmon, and G.~A. Kaminka, ``Multi-robot area patrol under
  frequency constraints,'' \emph{Annals of Mathematics and Artificial
  Intelligence}, vol.~57, no. 3-4, pp. 293--320, 2009.

\bibitem{stumpmulti}
E.~Stump and N.~Michael, ``Multi-robot persistent surveillance planning as a
  vehicle routing problem,'' in \emph{Conference on Automation Science and
  Engineering (CASE)}.\hskip 1em plus 0.5em minus 0.4em\relax IEEE, 2011, pp.
  569--575.

\bibitem{pasqualetticooperative}
F.~Pasqualetti, A.~Franchi, and F.~Bullo, ``On cooperative patrolling: Optimal
  trajectories, complexity analysis, and approximation algorithms,'' \emph{IEEE
  Transactions on Robotics}, vol.~28, no.~3, pp. 592--606, 2012.

\bibitem{smitht2012pers}
S.~L. Smith, M.~Schwager, and D.~Rus, ``Persistent robotic tasks: Monitoring
  and sweeping in changing environments,'' \emph{IEEE Transactions on
  Robotics}, vol.~28, no.~2, pp. 410--426, 2012.

\bibitem{smith2014persistent}
S.~Alamdari, E.~Fata, and S.~L. Smith, ``Persistent monitoring in discrete
  environments: Minimizing the maximum weighted latency between observations,''
  \emph{The International Journal of Robotics Research}, vol.~33, no.~1, pp.
  138--154, 2014.

\bibitem{aksaraydistributed}
A.~Olshevsky, D.~Aksaray, K.~Leahy, and C.~Belta, ``Distributed multi-agent
  persistent surveillance under temporal logic constraints,'' \emph{Fifth IFAC
  Workshop on Distributed Estimation and Control in Networked Systems},
  vol.~48, no.~22, pp. 174 -- 179, 2015.

\bibitem{ghosepers}
M.~Ramasamy and D.~Ghose, ``Learning-based preferential surveillance algorithm
  for persistent surveillance by unmanned aerial vehicles,'' in \emph{2016
  International Conference on Unmanned Aircraft Systems (ICUAS)}.\hskip 1em
  plus 0.5em minus 0.4em\relax IEEE, 2016, pp. 1032--1040.

\bibitem{karavrp}
I.~Kara, ``Arc based integer programming formulations for the distance
  constrained vehicle routing problem,'' in \emph{International Symposium on
  Logistics and Industrial informatics}.\hskip 1em plus 0.5em minus 0.4em\relax
  IEEE, 2011, pp. 33--38.

\bibitem{sundarfuelacc16}
K.~Sundar, S.~Venkatachalam, and S.~Rathinam, ``Formulations and algorithms for
  the multiple depot, fuel-constrained, multiple vehicle routing problem,'' in
  \emph{American Control Conference (ACC)}, 2016, pp. 6489--6494.

\bibitem{fischettitsptw}
N.~Ascheuer, M.~Fischetti, and M.~Gr{\"o}tschel, ``Solving the asymmetric
  travelling salesman problem with time windows by branch-and-cut,''
  \emph{Mathematical Programming}, vol.~90, no.~3, pp. 475--506, 2001.

\bibitem{applegatebook}
D.~L. Applegate, R.~E. Bixby, V.~Chvatal, and W.~J. Cook, \emph{The Traveling
  Salesman Problem: A Computational Study}.\hskip 1em plus 0.5em minus
  0.4em\relax Princeton university press, Princeton, NJ, 2011.

\bibitem{mtz}
C.~E. Miller, A.~W. Tucker, and R.~A. Zemlin, ``Integer programming formulation
  of traveling salesman problems,'' \emph{Journal of the ACM (JACM)}, vol.~7,
  no.~4, pp. 326--329, 1960.

\bibitem{melkoteintegrated}
S.~Melkote and M.~S. Daskin, ``An integrated model of facility location and
  transportation network design,'' \emph{Transportation Research Part A: Policy
  and Practice}, vol.~35, no.~6, pp. 515--538, 2001.

\bibitem{karamanplanning}
S.~Karaman, S.~Rasmussen, D.~Kingston, and E.~Frazzoli, ``Specification and
  planning of uav missions: a process algebra approach,'' in \emph{American
  Control Conference}.\hskip 1em plus 0.5em minus 0.4em\relax IEEE, 2009, pp.
  1442--1447.

\bibitem{rasmussentree}
S.~J. Rasmussen and T.~Shima, ``Tree search algorithm for assigning cooperating
  uavs to multiple tasks,'' \emph{International Journal of Robust and Nonlinear
  Control}, vol.~18, no.~2, pp. 135--153, 2008.

\end{thebibliography}

\end{document}